\documentclass{article}

\usepackage{amsmath,amsfonts,amssymb,amsthm,tikz}

\usepackage[utf8]{inputenc}
\usepackage{cite}

\newcommand{\norm}{{\mathcal{N}}}
\newcommand{\calO}{{\mathcal{O}}}
\newcommand{\calP}{{\mathcal{P}}}

\newcommand{\C}{{\mathbb{C}}}
\newcommand{\F}{{\mathbb{F}}}

\newcommand{\Q}{{\mathbb{Q}}}
\newcommand{\Z}{{\mathbb{Z}}}

\newcommand{\height}{\mathrm{h}}

\newcommand{\gerp}{{\mathfrak{p}}}

\DeclareMathOperator{\logast}{log^\ast\!}

\newcommand{\ph}{\varphi}

    \makeatletter
    \let\@fnsymbol\@alph
    \makeatother

\title{Big prime factors in orders of elliptic curves over finite fields}
\author{}

\newtheorem{theorem}{Theorem}[section]
\newtheorem{proposition}[theorem]{Proposition}

\newtheorem{remark}[theorem]{Remark}

\numberwithin{equation}{section}

\setcounter{tocdepth}{2}

\makeatletter

\renewcommand*\l@section[2]{%
  \ifnum \c@tocdepth >\z@
    \addpenalty\@secpenalty
    \addvspace{0.2em \@plus\p@}%
    \setlength\@tempdima{1.5em}%
    \begingroup
      \parindent \z@ \rightskip \@pnumwidth
      \parfillskip -\@pnumwidth
      \leavevmode \bfseries
      \advance\leftskip\@tempdima
      \hskip -\leftskip
      #1\nobreak\hfil \nobreak\hb@xt@\@pnumwidth{\hss #2}\par
    \endgroup
  \fi}

\makeatother

\newcounter{fn}

\begin{document}

\hfuzz 4.3pt


\author{Yuri Bilu\footnote{Supported  by the ANR project JINVARIANT}, \setcounter{fn}{\value{footnote}} \  Haojie Hong\footnote{Supported by the China Scholarship Council grant CSC202008310189} \  and Florian Luca\footnotemark[\value{fn}]}

\maketitle

\begin{abstract}
Let $E$ be an elliptic curve over the finite field $\mathbb F_q$. We prove that, when $n$ is a sufficiently large positive integer, $\#E(\mathbb F_{q^n})$ has a prime factor exceeding $n\exp(c\log n/\log\log n)$.
\end{abstract}

{\footnotesize

\tableofcontents

}

\section{Introduction}
A Lucas sequence $(u_n)_{n\ge 0}$ is a binary recurrent sequence of integers satisfying ${u_{n+2}=ru_{n+1}+su_n}$ for all ${n\ge 0}$, and with ${u_0=0}$, ${u_1=1}$. The parameters $r,~s$ are assumed to be nonzero coprime integers such that ${r^2+4s\ne 0}$. In this case, 
$$
u_n=\frac{\alpha^n-\beta^n}{\alpha-\beta}\quad {\text{\rm holds~for~all}}\quad n\ge 0,
$$
where $\alpha,\beta$ are the two roots of the quadratic ${x^2-rx-s=0}$. It is further assumed that $\alpha/\beta$ is not a root of unity. The Lucas sequences have nice divisibility properties. For example, if $m,~n$ are positive integers with ${m\mid n}$ then ${u_m\mid u_n}$. 

A primitive divisor of $u_n$ is a prime factor $p$ of $u_n$ which does not divide $u_m$ for any positive integer ${m<n}$ and does not divide ${r^2+4s}$. Working with the sequence of algebraic integers of general term ${v_n=(\alpha-\beta)u_n =\alpha^n-\beta^n}$, one can reformulate the above definition by saying that a primitive divisor is a prime number $p$ which divides $v_n$ but not $v_m$ for any positive integer ${m<n}$. It was shown in~\cite{BHV} that primitive divisors always exist if ${n\ge 31}$. Particular instances of this result were proved much earlier by Zsygmondy~\cite{Zi} (the case of rational integers $\alpha,~\beta$) and Carmichael~\cite{Car} (the case of real $\alpha,~\beta$). 

It is known that primitive divisors are congruent to $\pm 1\pmod n$. In particular, writing $P(m)$ for the largest prime factor of the integer $m$ with the convention that ${P(0)=P(\pm 1)=1}$, one has ${P(u_n)/n\ge (n-1)/n}$ for ${n\ge 31}$. Erd\H os~\cite{Erd} conjectured that $P(u_n)/n$ tends to infinity. This was proved to be so by Stewart~\cite{St13} who showed that $P(u_n)>n\exp(\log n/(104 \log\log n))$ holds for $n>n_0$, where $n_0$ is a constant which Stewart did not compute and which depends on the discriminant of the field ${\mathbb Q}(\alpha)$  and the number of distinct prime factors of $s$. Explicit values for $n_0$ were computed in~\cite{BHS21} 
at the cost of replacing $1/104$ by somewhat smaller constants (see Theorem~s~\ref{thordrat} and~\ref{thordquad} below). It is also shown in~\cite{BHS21} that~$n_0$ depends only on the field $\Q(\alpha)$, but is independent of the number of prime divisors of~$s$. 

 Schinzel~\cite{SC74} generalized the primitive divisor theorem to algebraic numbers in the following way. Let~$\gamma$ be an algebraic number of degree~$d$ which is not a root of unity, and denote ${v_n=\gamma^n-1}$. A  prime ideal ${\gerp\subset {\mathcal O}_{\mathbb K}}$ is called a primitive divisor of~$v_n$ if~$\gerp$ appears at positive exponent in the factorization of the principal fractional ideal $v_n{\mathcal O}_{\mathbb K}$  but~$\gerp$ does not appear in the factorization of $v_m{\mathcal O}_{\mathbb K}$ for any positive integer ${m<n}$.
 
 Schinzel proved that~$v_n$ has a primitive divisor for ${n\ge n_0(d)}$. Stewart~\cite{St77} gave an explicit value for $n_0(d)$ but he assumed that~$\gamma$ has a representation of the form ${\gamma=\alpha/\beta}$ with coprime integers $\alpha,\beta$ in ${\mathcal O}_{\mathbb K}$. An explicit value for $n_0$ without any additional hypothesis was given in~\cite{BL21}. 

In this note we show that Stewart's type result can be obtained for recurrent sequences other than Lucas. 
We look at the prime factors of a certain linear recurrent sequences of order~$4$ which is a particular instance of a norm of a complex quadratic Lucas sequence. Namely, we let~$q$ and~$a$ be  integers satisfying 
$$
q\ge 2, \qquad |a| < 2\sqrt q. 
$$
We denote~$\alpha$ and~$\bar\alpha$  the complex conjugate roots of ${x^2-ax+q}$. We prove the following theorem. 

\begin{theorem}
\label{thmain}
Set ${n_0:=\exp\exp(\max\{10^{10},3q\})}$
Let~$n$ be a positive integer satisfying  
${n\ge n_0}$. 
Then the rational integer ${(\alpha^n-1)(\bar\alpha^n-1)}$ has a prime divisor~$p$ satisfying 
\begin{equation*}
p\ge n \exp\left(0.0001\frac{\log n}{\log\log n}\right). 
\end{equation*}
\end{theorem}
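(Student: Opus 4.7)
The plan is to adapt Stewart's method~\cite{St13}, in the explicit form developed in~\cite{BHS21}, but executed in the imaginary quadratic field $K:=\Q(\alpha)$ and applied to the algebraic integer $\alpha^n-1\in\OO_K$ in place of a term of a classical Lucas sequence. Write $N_n:=(\alpha^n-1)(\bar\alpha^n-1)=|\alpha^n-1|^2\in\Z_{>0}$; since $|\alpha|=\sqrt q$, one has $q^n/4\le N_n\le 4q^n$ for $n\ge n_0$, and hence $\log N_n=n\log q+O(1)$. I would argue by contradiction: assuming $P:=P(N_n)<n\exp(0.0001\log n/\log\log n)$, I would derive $\log N_n<n\log q-\log 5$, contradicting the lower bound.

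For each rational prime $p\mid N_n$, let $\ell_p$ denote the rank of apparition, i.e.\ the least $m\ge 1$ with $p\mid N_m$. A standard divisibility argument (applied to the order-two divisibility sequence induced by $\alpha^m-1$) gives $p\mid N_n \Leftrightarrow \ell_p\mid n$; and, because $\alpha$ has residue degree at most $2$ at every prime of $K$ above $p$, one obtains $\ell_p\mid p^f-1$ for some $f\in\{1,2\}$, so $\ell_p\le p^2-1$. A $p$-adic linear forms in logarithms estimate (Yu's theorem) then bounds $v_p(N_n)\le C(q,p)\log n$ with an effective constant $C(q,p)$ of polynomial growth in $p$. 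Grouping by rank,
\[
\log N_n \;=\; \sum_{d\mid n}\;\sum_{\substack{p\le P\\ \ell_p=d}} v_p(N_n)\log p,
\]
and for each $d\mid n$ the primes $p\le P$ with $\ell_p=d$ lie in $O(\tau(d))$ residue classes modulo $d$, whence Brun--Titchmarsh gives at most $O\!\bigl(\tau(d)P/(\varphi(d)\log(P/d))\bigr)$ such primes. Splitting the $d$-sum at $d=n^{1/2}$ allows the small ranks to be absorbed via the $p$-adic bound, while the large ranks contribute $O(P/d)$ per residue class.

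Combining these ingredients and using standard estimates $\sum_{d\mid n}\tau(d)/\varphi(d)\ll(\log\log n)^{O(1)}$ together with $\tau(n)\le n^{(1+o(1))\log 2/\log\log n}$, one arrives at
\[
\log N_n \;\le\; n\log q \cdot (1-\delta),
\]
where the gap $\delta>0$ is dictated by the assumed inequality $\log(P/n)<0.0001\log n/\log\log n$; for $n\ge n_0$, this $\delta$ dominates the $O(1)$ loss and the contradiction follows. The principal obstacle is the explicit balancing at this last step: the effective constant from Yu's $p$-adic logarithm bound (sensitive to $h(\alpha)$ and to $\mathrm{disc}(K)$, which is what forces the $3q$ term in the double-exponential $n_0$), the Brun--Titchmarsh constant, and the divisor-sum factor must combine to leave a positive gap compatible with $0.0001$. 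This quantitative bookkeeping is the technical core of the proof, and amounts to executing the imaginary quadratic analogue of Theorems~\ref{thordrat} and~\ref{thordquad} in a form sharp enough for the prescribed constants, drawing on the explicit primitive-divisor estimates of~\cite{BL21}.
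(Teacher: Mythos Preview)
Your overall framework—Stewart's $p$-adic linear-forms bound combined with counting the primes dividing $N_n$ by residue classes—is correct, and it is the framework the paper uses. But the sketch glosses over precisely the point where the argument is delicate, and the substitute you propose does not close.

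The difficulty is the inert primes. If $p$ is inert in $K$, then a primitive divisor of $\alpha^n-1$ satisfies only $p^2\equiv 1\pmod n$, not $p\equiv 1\pmod n$. Consequently such $p$ may be as small as $\sqrt n$, and they can occupy up to $2^{\omega(n)}$ residue classes modulo $n$. Since $2^{\omega(n)}$ can be as large as $\exp\bigl(c\,\log n/\log\log n\bigr)$, this factor is of the \emph{same} order as the saving $\exp\bigl(-0.001\,\log p/\log\log p\bigr)$ coming from Theorem~\ref{thordquad}, and a direct ``(\#primes)$\times$(max contribution)'' bound produces no gap at all. Your proposal to group by rank $\ell_p=d$ and use $O(\tau(d))$ residue classes modulo $d$ with Brun--Titchmarsh runs into exactly this wall when $d$ is near $n$: the count $\tau(d)P/\varphi(d)$ carries the same $\tau(n)$-sized loss, and neither the split at $d=n^{1/2}$ nor the estimate $\sum_{d\mid n}\tau(d)/\varphi(d)\ll(\log\log n)^{O(1)}$ touches it. (For small $d$ you also need an argument; ``absorbed via the $p$-adic bound'' is not one, since the Yu bound is of size $\sim p$ per prime and there are many primes of small rank.)

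The paper resolves this with an ingredient absent from your plan. After separating split from inert primes and, in the inert case, passing to $\gamma=\bar\alpha/\alpha$ of norm $1$, it shows that the inert primes that matter correspond to divisors $d\mid n$ with $d<\tau(n)\log n$; primes with larger $d_p$ are disposed of by a direct size estimate. The decisive step is then a count of the \emph{small} divisors of $n$ via an $S$-unit estimate (Proposition~\ref{psunits}), yielding
\[
\#\{d\mid n:\ d<\tau(n)\log n\}\ \le\ \exp\!\left(70\,\frac{\log n\,\log\log\log n}{(\log\log n)^2}\right),
\]
which is genuinely smaller than $\exp(c\log n/\log\log n)$ and therefore \emph{is} beaten by the Stewart saving. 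This, not Brun--Titchmarsh, is what lets the inert case close with the stated constant $0.0001$; your outline needs this (or an equivalent) to go through.
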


When~$q$ is a prime power, the number 
$$
(\alpha-1)(\bar\alpha-1)=\alpha \bar\alpha-(\alpha+\bar\alpha)+1=q-a+1
$$ 
is the order of the group $\#E({\mathbb F}_q)$ of $\F_q$-rational points   on a certain elliptic curve~$E$.  Furthermore, ${(\alpha^n-1)(\bar\alpha^n-1)}$ represents the order of the group $\#E({\mathbb F}_{q^n})$ of  ${\mathbb F}_{q^n}$-rational points. 
The numbers $(\#E({\mathbb F}_{q^n}))_{n\ge 1}$ form a linearly recurrent sequence of order~$4$ 
with  roots $1,\alpha,\bar\alpha,q$. Like the Lucas sequences, these numbers have the property that $\#E(F_{q^m})\mid \#E(F_{q^n})$ when $m\mid n$ (because ${\mathbb F}_{q^n}$ is an extension of ${\mathbb F}_{q^m}$ of degree $n/m$). However, in spite of those similarities, some non-trivial new ideas  are needed to extend Stewart's argument to these sequences, see Subsection~\ref{ssfptwo}.

Note that  big prime factors  of orders of elliptic curves were studied before, albeit in a  different set-up. For instance, Akbary~\cite{Ak08} studied big prime factors of $\#E(\F_q)$, where~$E$ is a fixed  elliptic curve over~$\Q$ with complex multiplication.  He proved that, for a positive proportion of primes~$q$,  the number $\#E(\F_q)$ has a prime divisor bigger than $q^\theta$, where ${ \theta=1- e^{-1/4}/2=0.6105\ldots}$ We invite the reader to consult the comprehensive survey~\cite{Co19} for more information.

\subsection{Notation}
\label{ssnota}
Unless the contrary is stated explicitly,~$m$ and~$n$ (with or without indices) always denote positive integers and~$p$ (with or without indices) denotes a prime number.

Let~${\mathbb K}$ be a number field. We denote $D_{\mathbb K}$ and $h_{\mathbb K}$ the discriminant and the class number of~${\mathbb K}$. 
By a prime of~${\mathbb K}$ we mean a prime ideal of the ring of integers~$\calO_{\mathbb K}$.   If~$\gerp$ is prime of~${\mathbb K}$ with underlying rational prime~$p$, then we denote $f_\gerp$ its absolute residual degree and ${\norm\gerp =p^{f_\gerp}}$ its absolute norm.

We denote $\height(\alpha)$ the usual absolute logarithmic height of ${\alpha\in\bar\Q}$:
$$
\height(\alpha) = [{\mathbb K}:\Q]^{-1}\sum_{v\in M_{\mathbb K}}[{\mathbb K}_v:\Q_v]\log^+|\alpha|_v,
$$
where ${\log^+ =\max\{\log , 0\}}$. Here~${\mathbb K}$ is an arbitrary number field containing~$\alpha$, and the places ${v\in M_{\mathbb K}}$ are normalized to extend standard places of~$\Q$; that is, ${|p|_v=p^{-1}}$ if ${v\mid p <\infty}$ and ${|2021|_v=2021}$ if ${v\mid\infty}$. 
 
If~${\mathbb K}$ is a number field of degree~$d$ and ${\alpha\in {\mathbb K}}$ then the following formula is an immediate consequence of the definition of the height: 
$$
\height(\alpha) =\frac{1}{d}\left(\sum_{\sigma:{\mathbb K}\hookrightarrow\C}\log^+|\sigma(\alpha)|+\sum_\gerp\max\{0,-\nu_\gerp(\alpha)\}\log\norm\gerp\right), 
$$
where the first sum runs over the complex embeddings of~${\mathbb K}$ and the second sum runs over the primes of~${\mathbb K}$.   If ${\alpha \ne 0}$ then ${\height(\alpha) =\height(\alpha^{-1})}$, and we obtain the formula 
\begin{equation}
\label{ehe}
\height(\alpha) =\frac{1}{d}\left(\sum_{\sigma:{\mathbb K}\hookrightarrow\C}-\log^-|\alpha^{\sigma}|+\sum_\gerp\max\{0,\nu_\gerp(\alpha)\}\log\norm\gerp\right), 
\end{equation}
where ${\log^-=\min\{\log,0\}}$. 

Besides ${\log^+}$ and $\log^-$ we will also widely use  
$$
\logast =\max\{\log , 1\}.
$$

We use $O_1(\cdot)$ as the quantitative version of the familiar $O(\cdot)$ notation: ${A=O_1(B)}$ means ${|A|\le B}$. 

\section{Auxiliary facts}

\subsection{The Theorems of Stewart}

The following two theorems are, essentially, due to Stewart~\cite{St13}, though in the present form they can be found in~\cite{BHS21}, see Theorems~1.4 and~1.5 therein. 
 
\begin{theorem}
\label{thordrat}
Let~$\gamma$ be a non-zero algebraic number of degree~$d$, not a root of unity.  
Set
${p_0=\exp(80000 d (\logast d)^2)}$.  
Then for every prime~$\gerp$ of the field ${{\mathbb K}=\Q(\gamma)}$ whose absolute norm $\norm\gerp$ satisfies ${\norm\gerp\ge p_0}$, and every positive integer~$n$   we have 
$$
\nu_\gerp(\gamma^n-1) \le \norm\gerp\exp\left(-0.002d^{-1}\frac{\log \norm\gerp}{\log\log \norm\gerp}\right)\height(\gamma)\logast n. 
$$
\end{theorem}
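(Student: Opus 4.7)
The plan is to follow Stewart's strategy, reducing the problem to an application of Yu's explicit $\gerp$-adic theorem on linear forms in logarithms. Let~$e$ denote the multiplicative order of~$\gamma$ modulo~$\gerp$, which is well-defined whenever ${\nu_\gerp(\gamma)=0}$; if on the contrary ${\nu_\gerp(\gamma)\ne 0}$, then ${\gamma^n-1}$ is itself a $\gerp$-adic unit and there is nothing to prove. Similarly, if ${e\nmid n}$ then ${\nu_\gerp(\gamma^n-1)=0}$. Assume therefore ${\nu_\gerp(\gamma)=0}$ and ${e\mid n}$; since~$e$ divides the order ${\norm\gerp -1}$ of ${(\calO_{\mathbb K}/\gerp)^*}$, one has ${e\le \norm\gerp -1}$.

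The next step is a lifting-the-exponent identity. Provided the underlying rational prime~$p$ is odd and sufficiently large (both of which are forced by ${\norm\gerp\ge p_0}$ after verifying that~$\gerp$ is unramified when ${p\nmid D_{\mathbb K}}$), one has
$$
\nu_\gerp(\gamma^n-1) = \nu_\gerp(\gamma^e-1) + \nu_\gerp(n/e),
$$
where the second summand is bounded by ${\log n/\log p}$ and hence is completely negligible once~$p$ is large. The main task reduces to controlling ${\nu_\gerp(\gamma^e-1)}$, which equals the $\gerp$-adic valuation of the one-term $\gerp$-adic linear form ${e\log_\gerp\gamma}$.

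Now I would invoke Yu's explicit $\gerp$-adic estimate for linear forms in logarithms, which in its cleanest incarnation yields an upper bound of the shape
$$
\nu_\gerp(\gamma^e-1) \le C(d)\,\frac{\norm\gerp}{\log\norm\gerp}\,\height(\gamma)\,\logast e,
$$
with an explicit constant ${C(d)}$ that is polynomial in~$d$. To sharpen the naive ${\norm\gerp/\log\norm\gerp}$ factor to the exponentially-saving ${\norm\gerp\exp(-0.002 d^{-1}\log\norm\gerp/\log\log\norm\gerp)}$, one follows Stewart's double-application trick: apply Yu's theorem once to an auxiliary exponent chosen near an optimal cutoff, apply it again to the actual~$n$, and then balance the two bounds to extract the exponential saving. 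The relation ${e\le n}$ converts ${\logast e}$ into ${\logast n}$, giving the stated inequality.

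The principal obstacle is the careful bookkeeping of the explicit constants: extracting~$0.002$ and the $d^{-1}$ dependence requires propagating Yu's numerical constants through the double application and checking that every secondary $\log\log$ term is dominated. The threshold ${p_0=\exp(80000 d(\logast d)^2)}$ is precisely what is needed so that the lower-order corrections in Yu's bound (and the contribution of ${\nu_\gerp(n/e)}$, and the rare ramified or wild primes) get swallowed by the leading expression, producing the clean statement of the theorem.
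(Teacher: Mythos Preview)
The paper does not contain a proof of this statement. Theorem~\ref{thordrat} is quoted as an auxiliary result from the literature: the paper explicitly attributes it to Stewart~\cite{St13}, with the precise formulation taken from~\cite{BHS21} (Theorem~1.4 there). There is therefore nothing in the paper to compare your proposal against.

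That said, your sketch is a fair high-level summary of the Stewart strategy: reduce to the multiplicative order~$e$, peel off the ${\nu_\gerp(n/e)}$ contribution via lifting-the-exponent, and attack ${\nu_\gerp(\gamma^e-1)}$ with Yu's $\gerp$-adic lower bound for linear forms in logarithms. One caveat: your description of the ``double-application trick'' is vague at the crucial point. The exponential saving ${\exp(-c\,d^{-1}\log\norm\gerp/\log\log\norm\gerp)}$ does not fall out of simply applying Yu twice and balancing; in Stewart's argument (and in~\cite{BHS21}) it comes from a specific auxiliary construction that exploits the freedom in Yu's parameters together with arithmetic information about~$e$ and~$\norm\gerp$. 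If you intend to supply an actual proof rather than a sketch, that is the place where the real work lies, and the threshold ${p_0=\exp(80000 d(\logast d)^2)}$ is calibrated to exactly that computation. Everything else in your outline is routine.
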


\begin{theorem}
\label{thordquad}
Let~$\gamma$ be a non-zero algebraic number of degree~$2$, not a root of unity.  Assume that ${\norm\gamma=\pm1}$.  Set
$
{p_0=\exp\exp(\max\{10^8,2|D_K|\})}
$,
where $D_{\mathbb K}$ is the discriminant of the quadratic field ${{\mathbb K}=\Q(\gamma)}$. Then for every prime~$\gerp$ of~${\mathbb K}$ with underlying rational prime ${p\ge p_0}$,  and every positive integer~$n$   we have 
\begin{equation}
\label{eordquad}
\nu_\gerp(\gamma^n-1) \le p\exp\left(-0.001\frac{\log p}{\log\log p}\right)\height(\gamma)\logast n.
\end{equation}
\end{theorem}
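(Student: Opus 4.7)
The plan is to deduce Theorem~\ref{thordquad} from Theorem~\ref{thordrat} by splitting cases according to the behaviour of~$p$ in~$\mathbb{K}$. If~$p$ splits or ramifies, then $\norm\gerp=p$ and Theorem~\ref{thordrat} applied with $d=2$ gives directly
\[
\nu_\gerp(\gamma^n-1)\le p\exp\bigl(-0.001\log p/\log\log p\bigr)\height(\gamma)\logast n,
\]
with the constant $0.001=0.002\cdot d^{-1}$ matching the claim exactly.

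The substantive case is when~$p$ is inert, where $\norm\gerp=p^2$ and Theorem~\ref{thordrat} only yields a bound with $p^2$ in front, which is too weak. Here the hypothesis $\norm\gamma=\pm1$ is essential. In the inert case $\gerp=p\calO_{\mathbb{K}}$ and the residue field is $\calO_{\mathbb{K}}/\gerp\cong\F_{p^2}$. The nontrivial Galois element~$\sigma$ of $\mathbb{K}/\Q$ reduces on $\calO_{\mathbb{K}}/\gerp$ to the Frobenius $x\mapsto x^p$, so $\sigma(\gamma)\equiv\gamma^p\pmod\gerp$; combined with $\gamma\sigma(\gamma)=\norm\gamma=\pm1$ this yields $\gamma^{p+1}\equiv\pm1\pmod\gerp$, so the multiplicative order~$t$ of~$\gamma$ modulo~$\gerp$ divides $2(p+1)$. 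If $t\nmid n$ then $\nu_\gerp(\gamma^n-1)=0$ and the bound is trivial; otherwise set $\eta:=\gamma^t\in 1+\gerp$ and write $\gamma^n-1=\eta^{n/t}-1$.

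Once $\eta\equiv1\pmod\gerp$, the $\gerp$-adic logarithm converges on~$\eta$ and $\nu_\gerp(\eta^m-1)$ can be analysed via a $p$-adic linear form in logarithms. Applying Yu's estimate for a single logarithm to~$\eta$, the key gain is that the factor $\norm\gerp-1=p^2-1$ appearing in the naive bound is effectively replaced by the order~$t\le 2(p+1)$, which has size comparable to~$p$ rather than~$p^2$. Pushing through Stewart's explicit bookkeeping from~\cite{St13}, as adapted in~\cite{BHS21}, then recovers the constant $0.001$ and the leading factor~$p$. The main obstacle is implementing this refinement with explicit constants: one must control the height of~$\eta$ (bounded by $t\,\height(\gamma)$, but the factor~$t$ is absorbed by the saving from the small order), together with the regulator and class number of~$\mathbb{K}$, both of which are controlled by~$|D_{\mathbb{K}}|$; this dependence is the source of the $2|D_{\mathbb{K}}|$ term in the definition of~$p_0$.
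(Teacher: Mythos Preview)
The paper does not prove Theorem~\ref{thordquad}; it is quoted from~\cite{BHS21} (Theorem~1.5 there) as one of the two Stewart-type inputs and used as a black box. So there is no in-paper argument to compare your proposal against.

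That said, your outline is a reasonable reconstruction of how such a result is obtained. The split/ramified case does follow at once from Theorem~\ref{thordrat} with $d=2$ and $\norm\gerp=p$, giving exactly the constant $0.001=0.002/d$. For the inert case, your Frobenius observation---$\sigma(\gamma)\equiv\gamma^{p}\pmod\gerp$ together with $\gamma\,\sigma(\gamma)=\pm1$ forces $\gamma^{p+1}\equiv\pm1\pmod\gerp$, so the multiplicative order of~$\gamma$ modulo~$\gerp$ is $O(p)$ rather than $O(p^2)$---is precisely the mechanism that lets one replace $\norm\gerp=p^2$ by~$p$ in the final estimate. One small point you pass over: to reduce $\gamma$ modulo~$\gerp$ one needs $\nu_\gerp(\gamma)=0$; in the inert case this holds because $\gerp$ is Galois-stable and $\nu_\gerp(\gamma)+\nu_\gerp(\sigma(\gamma))=\nu_\gerp(\pm1)=0$.

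Where your proposal remains a sketch is exactly where you say so: ``pushing through Stewart's explicit bookkeeping'' to recover the precise constant $0.001$ and the specific threshold~$p_0$ means carrying Yu's $p$-adic linear-forms estimate with all constants explicit, which is the actual content of~\cite{BHS21}. Your attribution of the $2|D_{\mathbb K}|$ term to class-number/regulator control is plausible but not documented in the present paper; for a genuine verification one must consult~\cite{BHS21}.
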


\subsection{Cyclotomic polynomials and primitive divisors}
\label{sscypr}

Let~${\mathbb K}$ be a number field of degree~$d$ and ${\gamma\in {\mathbb K}^\times}$ not a root of unity. We consider the sequence ${u_n=\gamma^n-1}$. We call a ${\mathbb K}$-prime~$\gerp$ \textit{primitive divisor} of $u_n$ if 
$$
\nu_\gerp(u_n)\ge 1, \qquad \nu_\gerp(u_k)= 0 \quad (k=1,\ldots, n-1). 
$$
Let us recall some basic properties of primitive divisors. We denote by $\Phi_n(t)$ the $n$th cyclotomic polynomial.  

Items~\ref{ipd} and~\ref{idegtwo} of the following proposition are well-known and easy, and item~\ref{inpd} is Lemma~4 of Schinzel~\cite{SC74}; see also \cite[Lemma~4.5]{BL21}.

\begin{proposition}
\label{pprim}
\begin{enumerate}
\item
\label{ipd}
Let~$\gerp$ be a primitive divisor of $u_n$. Then  ${\nu_\gerp(\Phi_n(\gamma) )\ge 1}$ and ${\norm\gerp\equiv1\bmod n}$; in particular, ${\norm\gerp \ge n+1}$.

\item
\label{idegtwo}
Let~$\gerp$ be a primitive divisor of $u_n$ and~$p$ the rational prime underlying~$\gerp$. If~$\gamma$  is of degree~$2$ and absolute norm~$1$, then  ${p\equiv\pm1\bmod n}$. More specifically,  
$$
p\equiv 
\begin{cases}
1\bmod n & \text{if~$p$ splits  in~$\Q(\gamma)$}, \\
-1\bmod n &\text{if~$p$ is intert in~$\Q(\gamma)$}. 
\end{cases}
$$ 

\item
\label{inpd}
Assume that ${n\ge 2^{d+1}}$. Let~$\gerp$ be not a primitive divisor of~$u_n$. Then ${\nu_\gerp(\Phi_n(\gamma))\le \nu_\gerp(n)}$.
\end{enumerate}
\end{proposition}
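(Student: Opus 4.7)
For item~\ref{ipd}, I would begin with the cyclotomic factorization ${\gamma^n-1=\prod_{d\mid n}\Phi_d(\gamma)}$. Since~$\gerp$ is a primitive divisor, ${\nu_\gerp(\gamma)=0}$ (otherwise ${\nu_\gerp(\gamma^n-1)}$ would be non-positive), so all the factors~${\Phi_d(\gamma)}$ have non-negative $\gerp$-valuation. Primitivity also gives ${\nu_\gerp(\gamma^d-1)=0}$ for every proper divisor ${d\mid n}$, and the polynomial divisibility ${\Phi_d(X)\mid X^d-1}$ then forces ${\nu_\gerp(\Phi_d(\gamma))=0}$ for all such~$d$. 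Reading the factorization $\gerp$-adically yields ${\nu_\gerp(\Phi_n(\gamma))=\nu_\gerp(\gamma^n-1)\ge 1}$. For the congruence, I would observe that the image~$\bar\gamma$ in~${(\calO_{\mathbb K}/\gerp)^\times}$ has multiplicative order exactly~$n$: the order divides~$n$ since ${\bar\gamma^n=\bar 1}$, and it cannot be a strictly smaller divisor of~$n$ without violating primitivity. Hence~$n$ divides the group order~${\norm\gerp-1}$.

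For item~\ref{idegtwo}, the split and ramified cases have ${\norm\gerp=p}$, and item~\ref{ipd} immediately gives ${p\equiv 1\bmod n}$. The substantive case is when~$p$ is inert in~${\Q(\gamma)}$. My approach there would be to introduce the nontrivial element~${\sigma\in\mathrm{Gal}(\Q(\gamma)/\Q)}$, which coincides with the Frobenius on the residue field at~$\gerp$ and therefore satisfies ${\gamma^p\equiv\sigma(\gamma)\bmod\gerp}$. The norm-one assumption ${\gamma\sigma(\gamma)=1}$ rewrites this as ${\gamma^{p+1}\equiv 1\bmod\gerp}$, and comparing with the exact order~$n$ of~$\bar\gamma$ yields ${n\mid p+1}$.

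For item~\ref{inpd} I would simply invoke Schinzel's Lemma~4 in~\cite{SC74} (equivalently \cite[Lemma~4.5]{BL21}); this is also the only genuinely difficult point of the proposition. In outline, if~$m$ denotes the multiplicative order of~$\gamma$ modulo~$\gerp$, then~$m$ is a proper divisor of~$n$ because~$\gerp$ fails to be primitive for~$u_n$. A lifting-the-exponent style computation then expresses ${\nu_\gerp(\gamma^n-1)}$ in terms of ${\nu_\gerp(\gamma^m-1)}$ and ${\nu_\gerp(n/m)}$, after which one has to distribute those valuations among the cyclotomic factors~${\Phi_e(\gamma)}$ for ${e\mid n}$. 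The role of the hypothesis ${n\ge 2^{d+1}}$ is to provide enough numerical slack so that the bulk of the valuation is absorbed by lower cyclotomic factors, leaving at most~${\nu_\gerp(n)}$ on~${\Phi_n(\gamma)}$. Carrying out this distribution uniformly in~$\gerp$ and~$n$ is the technical heart of Schinzel's argument and the principal obstacle a self-contained treatment would have to overcome.
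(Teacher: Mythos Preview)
Your proposal is correct and matches the paper's treatment. The paper does not actually supply proofs here: it declares items~\ref{ipd} and~\ref{idegtwo} ``well-known and easy'' and cites Schinzel~\cite{SC74} (and \cite[Lemma~4.5]{BL21}) for item~\ref{inpd}, which is precisely what you do; the standard order-of-$\bar\gamma$ argument you give for item~\ref{ipd} and the Frobenius argument exploiting ${\gamma\sigma(\gamma)=1}$ for the inert case of item~\ref{idegtwo} are exactly the intended ``easy'' proofs.
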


\begin{remark}
In item~\eqref{idegtwo} the ramified~$p$ seem to be missing. However, it is easy to show that, when ${\norm\gamma=1}$ and~$p$ ramifies in $\Q(\gamma)$ then  ${\nu_\gerp(\gamma-1)>0}$ or ${\nu_\gerp(\gamma+1)>0}$. Hence, ${n=1}$ or ${n=2}$  in this case.  
\end{remark}

\subsection{Counting $S$-units}
Let~$S$ be a set of prime numbers. A positive integer is called $S$-unit if all its prime factors belong to~$S$. We denote $\Theta(x,S)$ the counting function for $S$-units:
$$
\Theta(x,S) =\#\{n\le x : p\mid n\Rightarrow p\in S\}. 
$$
We want to bound this function from above. 

\begin{proposition}
\label{psunits}
Let~$S$ be a set of~$k$ prime numbers.  Then for ${x\ge 3}$ we have 
\begin{equation}
\label{esunits}
\Theta(x,S) \le \exp\left(2k^{1/2}\log\log x+ 20\left(\frac{\log x}{\logast k}\right)\logast \left(\frac{k\logast k}{\log x}\right)\right).
\end{equation}
\end{proposition}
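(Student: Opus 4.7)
My plan is to apply Rankin's trick combined with a careful optimization over a parameter $\sigma>0$. First, I observe that $\Theta(x,S)$ is monotonically increasing in $S$ in the sense that replacing any prime of $S$ by a smaller one cannot decrease the count of $S$-units below $x$. Since the right-hand side of \eqref{esunits} depends on $S$ only through $k=|S|$, I may reduce to the case where $S=\{p_1,p_2,\dots,p_k\}$ consists of the first $k$ primes. The prime number theorem then gives $p_i\asymp i\log i$, and in particular $\log p_i\ge\log i+\log\log i-O(1)$ for $i$ sufficiently large.

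For any $\sigma>0$, Rankin's inequality yields
$$
\Theta(x,S)\;\le\;\sum_{n}(x/n)^{\sigma}\;=\;x^{\sigma}\prod_{p\in S}(1-p^{-\sigma})^{-1},
$$
where the sum runs over all $S$-units $n\ge 1$. Taking logarithms,
$$
\log\Theta(x,S)\;\le\;\sigma\log x+\sum_{p\in S}\log(1-p^{-\sigma})^{-1}.
$$
I would estimate the Euler-product sum by splitting the primes in $S$ according to the size of $p^{\sigma}$. For $p^{\sigma}>e$, the bound $\log(1-p^{-\sigma})^{-1}\le 2p^{-\sigma}$ gives a rapidly decaying contribution. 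For $p^{\sigma}\le e$, I would use $(1-p^{-\sigma})^{-1}\le 2/(\sigma\log p)$, contributing $\log(2/(\sigma\log p))$ per such prime.

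The parameter $\sigma$ would then be chosen according to the regime. When $k\logast k\le\log x$, the natural choice is $\sigma\asymp 1/\logast k$, so that $\sigma\log x$ matches the second term of \eqref{esunits}; the Euler sum can then be controlled using the PNT-derived lower bound on $\log p_i$ and the Chebyshev-type estimate $\sum_{i\le k}1/\log p_i\ll k/\logast k$. When $k\logast k>\log x$, a strictly smaller $\sigma$ is called for, and the term $2\sqrt{k}\log\log x$ emerges by bounding the sum $\sum_i\log(1-p_i^{-\sigma})^{-1}$ via Cauchy--Schwarz (or a second-moment estimate), the factor $\sqrt{k}$ arising from pairing $k$ with a mean-square estimate of the individual contributions.

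The main obstacle I anticipate is threading both regimes together with the concrete constants ($2$ and $20$) stated in \eqref{esunits}. In particular, the $\sqrt{k}$ rate in the ``crowded'' regime $k\logast k>\log x$ is delicate: a naive application of the $\log(1/(\sigma\log p))$ bound to all small primes produces only a linear-in-$k$ saving, so a finer partition of $S$ according to $p^{\sigma}$ (and a careful use of $\log\log x$ as the natural length scale when $\sigma$ is very small) is needed to recover the square-root behaviour. Getting the ``$\logast$'' inside the final expression to behave uniformly as $k$ passes through $1$, $\log x$, and $(\log x)^{2}$ will likely require an auxiliary case split that I would set up at the very start of the argument.
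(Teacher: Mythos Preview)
Your Rankin-trick approach is genuinely different from the paper's argument, and the second term in~\eqref{esunits} can indeed be reached that way (after reducing to the first $k$ primes and choosing $\sigma\asymp 1/\logast k$). However, your mechanism for the term $2k^{1/2}\log\log x$ is not convincing: I do not see how Cauchy--Schwarz or a second-moment bound on $\sum_i\log(1-p_i^{-\sigma})^{-1}$ produces a $\sqrt{k}$; each summand for a small prime is of size $\log(1/(\sigma\log p_i))$, and squaring and summing over $k$ terms does not obviously collapse to $(\log\log x)^2$. You flag this yourself as the ``main obstacle'', and as written the plan does not resolve it.

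The paper's proof is considerably more elementary and makes the origin of both terms transparent. One splits $S=S_1\cup S_2$ with $S_1=\{p\in S:p<k^{1/2}\}$ and $S_2=\{p\in S:p\ge k^{1/2}\}$, and uses the submultiplicativity $\Theta(x,S)\le\Theta(x,S_1)\Theta(x,S_2)$. The set $S_1$ has at most $k^{1/2}$ elements (there are fewer than $k^{1/2}$ primes below $k^{1/2}$), so the trivial bound $\Theta(x,S_1)\le(\log x/\log 2+1)^{|S_1|}$ immediately gives the first term $2k^{1/2}\log\log x$. For $S_2$, every prime satisfies $\log p\ge\tfrac12\log k$, so an $S_2$-unit $n\le x$ has exponent vector $(a_1,\dots,a_k)$ with $a_1+\cdots+a_k\le\ell:=\lfloor 2\log x/\log k\rfloor$; counting these lattice points by the binomial estimate $\binom{k+\ell}{\ell}\le(e(k+\ell)/\ell)^\ell$ yields the second term directly, with no analytic optimization needed.

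In short: the $\sqrt{k}$ is a pigeonhole artefact (few primes below $\sqrt{k}$), not an $L^2$ phenomenon, and once you see this the whole bound follows by a short combinatorial argument rather than by Rankin plus parameter tuning.
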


To start with, note the following trivial bound.

\begin{proposition}
\label{ptrivs}
In the set-up of Proposition~\ref{psunits} assuming $x\ge 7$ we have 
\begin{equation}
\label{etrivs}
\Theta(x,S) \le \exp(2k\log\log x). 
\end{equation}
\end{proposition}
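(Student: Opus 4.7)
The plan is to parametrize each $n$ counted by $\Theta(x,S)$ by its vector of prime exponents. Write every $n$ contributing to $\Theta(x,S)$ uniquely as $n=\prod_{p\in S}p^{a_p}$, with $a_p\ge 0$ integers. Since $n\le x$ and every prime $p\in S$ satisfies $p\ge 2$, the individual constraint $p^{a_p}\le x$ yields $a_p\le\lfloor\log x/\log 2\rfloor=\lfloor\log_2 x\rfloor$. Thus the number of admissible exponent vectors is at most $(1+\lfloor\log_2 x\rfloor)^k$, so
\[
\Theta(x,S)\ \le\ \bigl(1+\lfloor\log_2 x\rfloor\bigr)^{k}.
\]

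The remaining task is to verify the elementary numerical inequality
\[
1+\lfloor\log_2 x\rfloor\ \le\ (\log x)^{2}\qquad(x\ge 7),
\]
so that raising to the $k$-th power gives \eqref{etrivs}. For $x\ge 8$, set $m=\lfloor\log_2 x\rfloor\ge 3$; then $\log x\ge m\log 2$, and an easy estimate shows $m^{2}(\log 2)^{2}\ge m+1$ whenever $m\ge 3$. The only remaining range is $x\in[7,8)$, where $\lfloor\log_2 x\rfloor=2$ and one just checks $3\le(\log 7)^{2}\approx 3.79$ directly.

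The main obstacle, such as it is, is not conceptual but purely numerical: the naive bound with $\log_2 x$ in place of $\lfloor\log_2 x\rfloor$ narrowly fails at $x=7$, so one must keep the floor (or equivalently, split off the boundary case by hand). Apart from this tiny nuisance the statement is a direct box-counting estimate, serving only as the easy warm-up comparison for the sharper Proposition~\ref{psunits}.
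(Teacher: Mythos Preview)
Your proof is correct and follows exactly the same box-counting argument as the paper. In fact you are slightly more careful than the paper: there the floor is dropped and the displayed inequality $\bigl(\tfrac{\log x}{\log 2}+1\bigr)^k\le(\log x)^{2k}$ is asserted directly, which at $x=7$ reads $3.807\le 3.787$ and narrowly fails, so your insistence on retaining $\lfloor\log_2 x\rfloor$ is well placed.
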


\begin{proof}
If ${n\le x }$ then for every~$p$ we have ${\nu_p(n) \le \log x/\log 2}$. Hence 
$$
\Theta(x,S) \le \left(\frac{\log x}{\log 2}+1\right)^k \le \exp(2k\log\log x), 
$$
as wanted. 
\end{proof}

Next, let us consider a special case, when the primes from~$S$ are not too small.

\begin{proposition}
\label{psunitsbis}
In the set-up of Proposition~\ref{psunits}, assume  that  ${p\ge k^{1/2}}$ for every ${p\in S}$. Then 
\begin{equation}
\label{esunitsbis}
\Theta(x,S) \le \exp\left(10\left(\frac{\log x}{\logast k}\right)\logast \left(\frac{k\logast k}{\log x}\right)\right).
\end{equation}
\end{proposition}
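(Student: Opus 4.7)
The plan is to count $k$-tuples of non-negative exponents. Writing $L=\log x$, any $S$-unit $n\le x$ has the form $n=\prod_{i=1}^k p_i^{a_i}$ with $a_i\in\Z_{\ge 0}$ and $\sum a_i\log p_i\le L$. The hypothesis $p_i\ge k^{1/2}$ combined with the trivial $\log p_i\ge\log 2$ yields $\log p_i\ge\tfrac12\logast k$ for every $k\ge 1$: use $\log p_i\ge\log 2$ when $k\le 4$ (so that $\tfrac12\logast k\le\log 2$), and $\log p_i\ge\tfrac12\log k=\tfrac12\logast k$ when $k\ge 5$. Consequently $\sum a_i\le M:=\lfloor 2L/\logast k\rfloor$, so
\[\Theta(x,S)\le\#\{(a_1,\ldots,a_k)\in\Z_{\ge 0}^k:\sum a_i\le M\}=\binom{M+k}{k}.\]

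Next I would apply the standard estimate $\binom{a+b}{a}\le(e(a+b)/a)^a$, choosing $a=\min(M,k)$. Set $y:=k\logast k/L$. In Case~I, where $M\ge k$ (equivalently $y\le 2$), one obtains
\[\binom{M+k}{k}\le\left(\frac{2eM}{k}\right)^k\le\left(\frac{4eL}{k\logast k}\right)^k,\]
while in Case~II, where $M\le k$ (equivalently $y\ge 2$), one obtains
\[\binom{M+k}{M}\le\left(\frac{2ek}{M}\right)^M\le\left(\frac{ek\logast k}{L}\right)^{2L/\logast k}.\]

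Finally one compares the logarithms of these bounds with the target $10(L/\logast k)\logast y$. In Case~I, $\logast y=1$ (since $y\le 2<e$), and after division by $k$ the inequality reduces to $y\log(4e/y)\le 10$ on $(0,2]$, which is elementary: the function is increasing on this interval with value $2\log(2e)\approx 3.39$ at $y=2$. In Case~II the inequality becomes $2(1+\log y)\le 10\logast y$ for $y\ge 2$: if $2\le y\le e$ the left side is at most $4$ and $\logast y=1$, while if $y\ge e$ it reduces to $2+2\log y\le 10\log y$, i.e.\ $\log y\ge 1/4$, which holds.

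I do not anticipate a genuine obstacle; the arithmetic is all elementary. The main nuisance is the piecewise nature of $\logast$, which forces case splits both in the binomial estimate (depending on whether $M\le k$) and in the target (depending on whether $y\le e$). One has to check that the generous constant $10$ comfortably absorbs the slack coming from replacing $\log p_i$ by $\tfrac12\logast k$ and from the inequality $\binom{a+b}{a}\le (e(a+b)/a)^a$, but this is where the factor $10$ (rather than, say, $4$) is useful.
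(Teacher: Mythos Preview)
Your proof is correct and follows essentially the same line as the paper's: bound $\sum a_i\le M=\lfloor 2\log x/\logast k\rfloor$ and then estimate the resulting binomial coefficient. The differences are cosmetic rather than substantive: you use the closed form $\#\{(a_i):\sum a_i\le M\}=\binom{M+k}{k}$ whereas the paper bounds $\sum_{i\le \ell}\binom{k+i}{i}$ by $(\ell+1)\binom{k+\ell}{\ell}$; you treat all $k$ uniformly via $\log p_i\ge\tfrac12\logast k$ whereas the paper disposes of $x<7$ and $k\le 2$ by separate ad hoc arguments before assuming $k\ge 3$; and you split on $M\lessgtr k$, applying $\binom{a+b}{c}\le(e(a+b)/c)^c$ with $c=\min(M,k)$, whereas the paper always takes $c=\ell$ and splits on $k\lessgtr 9\ell$. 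Each of these tweaks buys a little tidiness but does not change the substance of the argument. (One tiny omission: in your Case~II you should note separately that $M=0$ gives $\Theta(x,S)\le 1$, since $(2ek/M)^M$ is undefined there; this is of course trivial.)
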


\begin{proof}
If $x<7$, then either $\Theta(x,S)=0$ so the above inequality is trivially true, or $k\le 25$, and the right--hand side above is at least 
$$
\exp\left(\left(\frac{10}{\log 25}\right)\log x\right)>x^3>\lfloor x\rfloor\ge \Theta(x,S).
$$ 
If $x\ge 7$ and ${k\le 2}$ then~\eqref{esunitsbis} follows from~\eqref{etrivs}. From now on we assume that ${k\ge 3}$; in particular, ${\logast k=\log k}$. 
Write ${S= \{p_1 , p_2 ,\ldots , p_k\}}$.  Then every $S$-unit~$n$  can be presented as ${p_1^{a_1}\cdots p_k^{a_k}}$ with non-negative integers ${a_1, \ldots, a_k}$. If ${n\le x}$ then 
$$
a_1\log p_1 +\cdots +a_k\log p_k \le \log x. 
$$
By the assumption, ${\log p_i\ge (1/2)\log k}$ for ${i=1, \ldots, k}$. Hence,
\begin{equation}
\label{einequality}
a_1+\cdots+a_k  \le \ell, 
\end{equation}
where  ${\ell= \lfloor 2\log x/\log k\rfloor}$. 
We may assume that ${\ell\ge 1}$: if ${\ell=0}$ then the only solution of~\eqref{einequality} is ${a_1=\cdots=a_k=0}$,  and ${\Theta(x,S)=1}$. For further use, note that 
\begin{equation*}
\frac{\log x}{\log k}\le \ell \le 2\left(\frac{\log x}{\log k}\right). 
\end{equation*}
Inequality~\ref{einequality} has exactly 
$$
\sum _{i=0}^\ell \binom{k+i}{i} 
$$
solutions in ${(a_1, \ldots, a_k)\in \Z_{\ge0}^k}$. Hence, 
\begin{align*}
\Theta(x,S) &\le (\ell+1) \binom{k+\ell}{\ell} \\
&\le (\ell+1) \left(e\left(\frac{k+\ell}{\ell}\right)\right)^\ell\\
&\le  \exp\left(\ell \log \left(2e\left(\frac{k+\ell}{\ell}\right)\right)\right) \qquad\text{(we used ${\ell+1\le 2^\ell}$)} \\
&\le \exp\left(2\left(\frac{\log x}{\log k}\right)\log\left(2e\left(\frac{k+\ell}{\ell}\right)\right)\right)   . 
\end{align*}
If ${k\le 9\ell}$ then  
$$
\log\left(2e\frac{k+\ell}{\ell}\right) \le \log(20e) <4, 
$$
and we are done. If ${k\ge 9\ell}$ then 
$$
\log\left(2e\left(\frac{k+\ell}{\ell}\right)\right) \le \log\left(8 \left(\frac{k}{\ell}\right)\right)\le \log \left(8\left(\frac{k\log k}{\log x}\right)\right) \le 4 \logast \left(\frac{k\log k}{\log x}\right), 
$$
and we are done again. 
\end{proof}

\begin{proof}[Proof of Proposition~\ref{psunits}]
Write ${S=S_1\cup S_2}$, where 
$$
S_1=\{p\in S :p <k^{1/2}\}, \qquad S_2=\{p\in S :p \ge k^{1/2}\}. 
$$
Then, clearly ${\Theta(x,S) \le \Theta (x, S_1)\Theta(x,S_2) }$.  We estimate $\Theta (x, S_1)$ using Proposition~\ref{ptrivs} and $\Theta(x,S_2)$ using Proposition~\ref{psunitsbis}:
\begin{align*}
\Theta (x, S_1)&\le  \exp(2k^{1/2}\log\log x), \\ 
\Theta(x,S_2)&\le 
\exp\left(10\left(\frac{\log x}{\logast (k-k^{1/2})}\right)\logast \left(\frac{k\logast k}{\log x}\right)\right)\\
 &\le \exp\left(20\left(\frac{\log x}{\logast k}\right)\logast \left(\frac{k\logast k}{\log x}\right)\right).
\end{align*}
The result follows. 
\end{proof}

\section{Proof of Theorem~\ref{thmain}}

Denote ${{\mathbb K}=\Q(\alpha)}$. It is an imaginary quadratic field. Hence, for a non-zero ${\theta\in {\mathcal O}_{\mathbb K}}$ we have  
$$
\height(\theta) = \log|\theta|=\frac12 \sum_\gerp\nu_\gerp(\theta) \log \norm\gerp, 
$$
the sum being over the finite primes of~${\mathbb K}$.

We apply this with ${\theta=\Phi_n(\alpha)}$ (recall that $\Phi_n(t)$ denotes the $n$th cyclotomic polynomial). We have 
\begin{equation}
\label{elogphin}
\log|\Phi_n(\alpha)| = \ph(n)\log|\alpha|+ \sum_{d\mid n}\mu\left(\frac nd\right)\log|1-\alpha^{-d}| = \frac12\ph(n)\log q+ O_1(5). 
\end{equation}
Indeed, we have ${|\alpha|=q^{1/2}\ge \sqrt2}$ and 
${\bigl|\log|1+z|\bigr| \le 2|z|}$ for ${|z|\le 1/\sqrt2}$. Hence
$$
\left|
\sum_{d\mid n}\mu\left(\frac nd\right)\log|1-\alpha^{-d}|
\right| < 2\sum_{d=1}^\infty |\alpha|^{-d} <5,  
$$
which proves~\eqref{elogphin}. Thus,
$$
\sum_\gerp\nu_\gerp(\Phi_n(\alpha)) \log \norm\gerp = \ph(n)\log q+ O_1(10). 
$$
Proposition~\ref{pprim}.\ref{inpd} implies that, for ${n\ge 8}$, 
$$
\sum_{\text{$\gerp$ not primitive}} \nu_\gerp(\Phi_n(\alpha)) \log \norm\gerp \le 2\log n, 
$$
the sum being over~$\gerp$ which are non-primitive divisors of ${\alpha^n-1}$.  Hence,
$$
\sum_{\text{$\gerp$ primitive}} \nu_\gerp(\Phi_n(\alpha)) \log \norm\gerp \ge \ph(n)\log q  -10-2\log n. 
$$
The Euler totient function $\ph(n)$ satisfies 
\begin{equation}
\label{ersph} 
\ph(n) \ge 0.5 \frac{n}{\log\log n} \qquad (n\ge 10^{20})
\end{equation}
(see~\cite[Theorem~15]{RS62}). 
Hence for ${n\ge 10^{20}}$ we have
$$
\sum_{\text{$\gerp$ primitive}} \nu_\gerp(\Phi_n(\alpha)) \log \norm\gerp \ge 0.8 \ph(n)\log q. 
$$
From now on, the proof splits into two cases, depending on whether the primes with residual degree~$1$ contribute more to the sum, or those with residual degree~$2$ do. Precisely, we have
\begin{align}
\label{efpone}
\text{either}\qquad \sum_{\genfrac{}{}{0pt}{}{\text{$\gerp$ primitive}}{f_\gerp=1}} \nu_\gerp(\Phi_n(\alpha)) \log \norm\gerp &\ge 0.4 \ph(n)\log q, \\
\label{efptwo}
\text{or}\qquad \sum_{\genfrac{}{}{0pt}{}{\text{$\gerp$ primitive}}{f_\gerp=2}} \nu_\gerp(\Phi_n(\alpha)) \log \norm\gerp &\ge 0.4 \ph(n)\log q. 
\end{align} 
Case~\eqref{efpone} is easier, the proof follows the same lines as the proof of Theorem~1.2 in~\cite{BHS21}. Case~\eqref{efptwo} is harder and requires more intricate arguments. 

\subsection{Case~\eqref{efpone}}
\label{sseasy}

{\sloppy 

We will apply  Theorem~\ref{thordrat} with ${\gamma=\alpha}$ and ${{\mathbb K}=\Q(\alpha)}$, so that ${d=2}$ and ${p_0=\exp(160000)}$. We may assume that ${n>p_0}$, because~$n_0$ from Theorem~\ref{thmain} is bigger than~$p_0$. 

}

Let~$P$ be the biggest rational prime~$p$ with the following two properties:~$p$ splits in ${{\mathbb K}=\Q(\alpha)}$, and ${\alpha^n-1}$ admits a primitive divisor~$\gerp$ with underlying prime~$p$. We want to show that 
\begin{equation}
\label{ewant}
P>n\exp\left(0.0002\frac{\log n}{\log\log n}\right). 
\end{equation}
Let~$\gerp$ be a primitive divisor of ${\alpha^n-1}$ with ${f_\gerp=1}$, and~$p$ the underlying rational prime. Then ${p\le P}$ and 
${p=\norm\gerp \equiv1\bmod n}$ by Proposition~\ref{pprim}.\ref{ipd}. In particular, ${p>n>p_0}$, and Theorem~\ref{thordrat} applies:
\begin{align*}
\nu_\gerp(\alpha^n-1) &\le p\exp\left(-0.001\frac{\log p}{\log\log p}\right)\cdot \frac 12 \log q \log n \\
&\le P\exp\left(-0.001\frac{\log n}{\log\log n}\right)  \log q \log n. 
\end{align*}
Hence, 
$$
\sum_{\genfrac{}{}{0pt}{}{\text{$\gerp$ primitive}}{f_\gerp=1}} \nu_\gerp(\Phi_n(\alpha)) \log \norm\gerp \le \pi(P; n,1)P\exp\left(-0.001\frac{\log n}{\log\log n}\right)  \log q \log n \log P ,  
$$
where, as usual ${\pi(x;m,a)}$ counts prime in the residue class ${a\bmod m}$. 
Estimating trivially ${\pi(P; n, 1)\le P/n}$,  we obtain 
$$
\sum_{\genfrac{}{}{0pt}{}{\text{$\gerp$ primitive}}{f_\gerp=1}} \nu_\gerp(\Phi_n(\alpha)) \log \norm\gerp \le \frac{P^2\log P}{n} \exp\left(-0.001\frac{\log n}{\log\log n}\right) \log n  \log q .   
$$
Compared with~\eqref{efpone}, this implies 
$$
P^2\log P \ge 0.4 \frac{n\ph(n)}{\log n} \exp\left(0.001\frac{\log n}{\log\log n}\right). 
$$
Using~\eqref{ersph}, this implies~\eqref{ewant} for ${n>n_0}$.

\subsection{Case~\eqref{efptwo}}
\label{ssfptwo}
If~$\gerp$ is a prime of~${\mathbb K}$ with ${f_\gerp=2}$ then it is a rational prime, and we write~$p$ instead of~$\gerp$. For such~$p$ we have ${\nu_p(\alpha^n-1)=\nu_p(\bar\alpha^n-1)}$. Setting ${\gamma =\bar\alpha/\alpha}$, we obtain
$$
\nu_p(\gamma^n-1) \ge\nu_p\bigl((\bar\alpha^n-1)-(\alpha^n-1)\bigr) \ge \nu_p(\alpha^n-1)\ge \nu_p(\Phi_n(\alpha)). 
$$
Hence,~\eqref{efptwo} implies the inequality 
\begin{equation*}
\sum_{p\in \calP}\nu_p(\gamma^n-1) \log p \ge 0.2\ph(n)\log q
\end{equation*} 
(note that ${\norm p =p^2}$), where the set~$\calP$ consists of the rational primes~$p$ inert in~$K$ and satisfying ${\nu_p(\alpha^n-1)>0}$:
$$
\calP=\{\text{$p$ inert in~${\mathbb K}$ and  ${\nu_p(\alpha^n-1)>0}$}\}. 
$$
We are now tempted to bound the sum on the left as we did in Subsection~\ref{sseasy}, but with Theorem~\ref{thordrat} replaced by Theorem~\ref{thordquad}, which applies here because ${\norm\gamma=1}$. However, now instead of ${p\equiv 1\bmod n}$ we have merely ${p^2\equiv 1\bmod n}$, and    we have to use a more delicate argument.

Denote ${v_n=\gamma^n-1}$. If ${\nu_p(v_n)>0}$ then there is a divisor~$d$ of~$n$ such that~$p$ is primitive for $v_{n/d}$. We denote it~$d_p$. We have
$$
\nu_p(v_n) \le  \nu_p(v_{n/d_p}) + \sum_{\genfrac{}{}{0pt}{}{m\mid n}{m\ne n/d_p}}\nu_p(\Phi_m(\gamma)). 
$$
Proposition~\ref{pprim}.\ref{inpd} bounds the sum on the right by 
$$
\sum_{m\mid n}\nu_p(m) +\sum_{m=1}^7\nu_p(\Phi_m(\gamma)). 
$$
It follows that
\begin{equation*}
\sum_{p\in \calP}\nu_p(\gamma^n-1) \log p \le \sum_{p\in \calP} \nu_p(v_{n/d_p})  + \sum_{m\mid n}\log m + \sum_{m=1}^7\sum_p\nu_p(\Phi_m(\gamma))\log p. 
\end{equation*}
The middle sum on the right is trivially estimated by ${\tau(n)\log n}$, where $\tau(n)$ denotes the number of divisors of~$n$:
$$
\tau(n)=\sum_{m\mid n}1. 
$$
To estimate the double sum on the right, note that 
\begin{equation*}
\nu_p(\Phi_m(\gamma)) \le \nu_p(v_m) \le \frac 12 \nu_p\bigl((\alpha^m-\bar\alpha^m)^2\bigr). 
\end{equation*}
Since ${(\alpha^m-\bar\alpha^m)^2}$ is a rational integer of absolute value not exceeding $4q^m$, this implies that 
\begin{equation}
\label{edivides}
\sum_p \nu_p(v_m)\log p \le \frac 12 m\log q +\log 2.  
\end{equation}
Hence,
$$
\sum_{m=1}^7\sum_p\nu_p(\Phi_m(\gamma))\log p \le 14\log q + 7\log2. 
$$
Putting all this together, we obtain the inequality 
\begin{equation*}
\sum_{p\in \calP} \nu_p(v_{n/d_p})\log p \ge 0.2\ph(n)\log q - \tau(n)\log n - 14\log q-7\log 2. 
\end{equation*}

\subsubsection{Disposing of big~$d_p$}

We want to get rid in our sum of primes~$p$ with ${d_p\ge \tau(n)\log n }$.  
Using~\eqref{edivides}, we obtain 
$$
\sum_{d_p\ge \tau(n)\log n } \nu_p(v_{n/d_p})\log p \le \frac12n\log q \sum_{\genfrac{}{}{0pt}{}{d\mid n}{d\ge \tau(n)\log n }}\frac1d + \tau(n)\log 2 
$$
The sum on the right is trivially estimated as
$$
\frac{\tau(n)}{\tau(n)\log n}=\frac{1}{\log n}. 
$$
Hence ,
$$
\sum_{d_p\ge \tau(n)\log n } \nu_p(v_{n/d_p})\log p \le \frac{n}{2\log n}\log q  + \tau(n)\log 2 . 
$$
Denote by~$\calP'$ the subset of~$\calP$ consisting of~$p$ with  ${d_p < \tau(n)\log n}$:
$$
\calP'=\{ p \in \calP: \ d_p < \tau(n)\log n\}. 
$$
Then we obtain 
\begin{align*}
\sum_{p\in \calP'} \nu_p(v_{n/d_p})\log p &\ge  0.2\ph(n)\log q - \tau(n)\log n - 14\log q-7\log 2\\
&\hphantom{\ge}-\frac{n}{2\log n}\log q  - \tau(n)\log 2 . 
\end{align*}
We have 
\begin{equation}
\label{etaun}
\tau(n) \le \exp\left(1.1\frac{\log n}{\log\log n}\right)\qquad (n\ge 3)
\end{equation}
(see \cite[Theorem~1]{NR83}). Using this and~\eqref{ersph}, we deduce that, for 
$$
n\ge n_0\ge \exp\exp(10^{10})
$$
(which is true by assumption), we have 
\begin{equation}
\label{esumpprime}
\sum_{p\in\calP'} \nu_p(v_{n/d_p})\log p \ge  0.1\ph(n)\log q. 
\end{equation}

\subsubsection{Counting divisors ${d<\tau(n)\log n}$}
The number of divisors ${d<\tau(n)\log n}$  can be estimated using Proposition~\ref{psunits}. Denote ${x=\tau(n)\log n}$ and denote by~$S$ the set of prime factors of~$n$, so that ${\# S =\omega(n)}$.   Then 
\begin{align*}
\#\{d\mid n : d<x\} &\le \Theta(x,S)\\
& \le \exp\left(2\omega(n)^{1/2}\log\log x+ 20\frac{\log x}{\logast \omega(n)}\logast \frac{\omega(n)\logast \omega(n)}{\log x}\right). 
\end{align*}
For further use, note the trivial estimates 
\begin{align}
\label{etaugeom}
\log \tau(n) & \ge \omega(n) \log 2, \\
\label{etauleom}
\log \tau(n) &\le \omega(n) \log \left(\frac{\log n}{\log2}+1\right) \le 2\omega(n)\log\log n
\end{align}
(recall that ${n\ge \exp\exp(10^{10})}$). Note also the estimates
\begin{align}
\label{elogtaun}
\log \tau(n) &\le 1.1\frac{\log n}{\log\log n}, \\
\label{eomegan}
\omega(n) &\le 1.4 \frac{\log n}{\log\log n}
\end{align}
(see~\eqref{etaun} and \cite[Théorème~11]{Ro83}). 

Using~\eqref{elogtaun} and~\eqref{eomegan}, we deduce that, for ${n\ge \exp\exp(10^{10})}$, we have 
\begin{equation}
2\omega(n)^{1/2}\log\log x \le (\log n)^{1/2}\log\log n. 
\end{equation}
Using~\eqref{etaugeom} and~\eqref{eomegan}, we deduce that 
\begin{equation}
\frac{\omega(n)\logast \omega(n)}{\log x} \le \frac{\omega(n)\logast \omega(n)}{\log \tau(n)} \le \frac{\logast \omega(n)}{\log 2} \le 2\log\log n. 
\end{equation}
To estimate ${\log x/\logast\omega(n) }$, we consider two cases. Assume first that 
$$
\omega(n)\le \frac{\log n}{(\log\log n)^3}. 
$$
In this case, using~\eqref{etauleom}, we estimate 
$$
\frac{\log x}{\logast\omega(n)} \le \frac{2\omega(n) \log \log n+\log\log n}{1} \le 3\omega(n) \log \log n\le  3 \frac{\log n}{(\log\log n)^2}. 
$$
Now assume that 
$$
\omega(n)\ge \frac{\log n}{(\log\log n)^3}. 
$$
In this case, using~\eqref{elogtaun}, we obtain
$$
\frac{\log x}{\logast\omega(n)} \le \frac{1.1\frac{\log n}{\log\log n}+\log\log n}{\log\log n-3\log \log \log n} \le  3 \frac{\log n}{(\log\log n)^2}. 
$$
Thus, in any case
$$
\frac{\log x}{\logast\omega(n)} \le  3 \frac{\log n}{(\log\log n)^2}. 
$$
Putting this all together, we obtain
\begin{align}
\#\{d\mid n : d<x\} & \le \exp\left((\log n)^{1/2}\log\log n+ 20\cdot 3 \frac{\log n}{(\log\log n)^2}\log (2\log\log n) \right) \nonumber\\
\label{emanylogs}
&\le \exp\left( 70\frac{\log n\log\log\log n}{(\log\log n)^2} \right). 
\end{align}

\subsubsection{The cardinality of~$\calP'$}

The crucial step is estimating the number of primes in the set~$\calP'$. Denote~$P$ the biggest element of~$\calP'$. 
We are going to prove that 
\begin{equation}
\label{eboundpprime}
\#\calP' \le \left(\frac Pn +1\right) \exp\left( 80\frac{\log n\log\log\log n}{(\log\log n)^2} \right).
\end{equation}

Let~$p$ be a prime from the set~$\calP'$. Recall that ${n\mid p^2-1}$; in particular, ${p>2}$. Assume first that~$n$ is odd. In this case the numbers 
${\gcd(p-1, n)}$  and ${\gcd(p+1,n)}$ 
are coprime. We write them, respectively,~$d$ and~$n/d$. 
Thus, we have
\begin{equation}
\label{endd}
p\equiv -1\bmod n/d, \qquad p\equiv 1\bmod d
\end{equation}
for some~$d$ dividing~$n$ and such that ${\gcd(n/d,d)=1}$. By the definition of~$d_p$ we must have ${d\mid d_p}$. In particular, if ${p\in \calP'}$ then ${d< \tau(n)\log n}$. 

By the Chinese Remainder Theorem, for every ${d\mid n}$ such that ${\gcd(n/d,d)=1}$, there exists a unique ${a_d \in \{1, \ldots , n-1\}}$ such that ${p\equiv a_d\bmod n}$ holds for every~$p$ satisfying~\eqref{endd}. It follows that 
$$
\#\calP' \le \sum_{\genfrac{}{}{0pt}{}{d\mid n}{d<\tau(n)\log n}}\pi(P;n, a_d). 
$$
We estimate trivially ${\pi(P;n, a_d)\le P/n+1}$. Hence, when~$n$ is odd, we have the upper bound
\begin{equation}
\label{enodd}
\#\calP' \le \left(\frac{P}{n}+1\right)\#\{d\mid n : d<\tau(n)\log n\}. 
\end{equation}

If~$n$ is even, the argument is similar, but slightly more complicated. Assume, for instance, that ${p\equiv 3 \bmod 4}$. Then the numbers 
$$
\gcd\left(\frac{p-1}{2}, \frac{n}{2}\right) , \qquad \gcd\left(p+1, \frac{n}{2}\right) 
$$
are coprime, and we write them~$d$ and $n/2d$, respectively; note also that~$d$ is odd. We have ${2d\mid d_p}$, and, in particular, ${d<\tau(n)\log n}$.  The system of congruences 
\begin{equation*}
p\equiv -1\bmod \frac{n}{2d}, \qquad p\equiv 1\bmod d
\end{equation*}
is equivalent to  ${p\equiv a_d\bmod n/2}$, where ${a_d\in \{1, \ldots, n/2-1\}}$ depends only on~$d$. 
Similarly, when ${p\equiv 1 \bmod 4}$, we have ${p\equiv b_d\bmod n/2}$, where ${d<\tau(n)\log n}$ and ${b_d\in \{1, \ldots, n/2-1\}}$ depends only on~$d$. We obtain
\begin{align}
\#\calP' &\le \sum_{\genfrac{}{}{0pt}{}{d\mid n}{d<\tau(n)\log n}}\bigl(\pi(P;n/2, a_d)+\pi(P;n/2, b_d)\bigr)\nonumber\\
\label{eneven}
&\le \left(4\frac{P}{n}+2\right)\#\{d\mid n : d<\tau(n)\log n\}.
\end{align}
We see that upper bound~\eqref{eneven} holds in all cases. Combining it with~\eqref{emanylogs}, we obtain
$$
\#\calP' \le \left(\frac Pn +\frac12\right) \exp\left( 70\frac{\log n\log\log\log n}{(\log\log n)^2}+\log 4 \right), 
$$
which is sharper than~\eqref{eboundpprime}. 

\subsubsection{Using Stewart}
Now it is the time to use Theorem~\ref{thordquad}. To start with, note  that ${|D_{\mathbb K}|\le q}$. Hence,~$p_0$ from Theorem~\ref{thordquad} does not exceed ${n_0^{1/2}}$.  Now if ${\nu_p(\gamma^n-1)>0}$ then ${n\mid p^2-1}$, see Proposition~\ref{pprim}.\ref{ipd}. Hence, ${p>n^{1/2}\ge n_0^{1/2}\ge p_0}$, and Theorem~\ref{thordquad} applies. For ${p\in \calP'}$ it gives 
\begin{align}
\nu_p(\gamma^n-1) &\le p\exp\left(-0.001\frac{\log p}{\log\log p}\right)\height(\gamma)\log n \nonumber\\
\label{enuple}
&\le 2P\exp\left(-0.0005\frac{\log n}{\log\log n}\right)\log q\log n, 
\end{align}
because 
$$
p\le P, \qquad \frac{\log p}{\log\log p} \ge \frac12 \frac{\log n}{\log\log n}, \qquad \height(\gamma) \le 2q. 
$$
Since ${\nu_p(v_{n/d_p}) \le \nu_p(\gamma^n-1)}$, we can combine~\eqref{enuple} with~\eqref{esumpprime}, obtaining 
$$
2P\log P\exp\left(-0.0005\frac{\log n}{\log\log n}\right)\#\calP'\log q\log n 
\ge 0.1 \ph(n) \log q. 
$$ 
Using~\eqref{eboundpprime} and~\eqref{ersph}, this implies, for ${n\ge \exp\exp(10^{10})}$, that 
\begin{align*}
P(P+n)\log P &\ge n^2 \exp\left(\left(0.0004-100\frac{\log\log\log n}{\log \log n}\right)\frac{\log n}{\log\log n}\right)\\
&\ge n^2 \exp\left(0.0003\frac{\log n}{\log\log n}\right). 
\end{align*}
If ${P<n}$ then the latter inequality is clearly impossible for ${n\ge \exp\exp(10^{10})}$. Hence, ${P\ge n}$, and we obtain 
$$
P^2\log P \ge \frac12 n^2 \exp\left(0.0003\frac{\log n}{\log\log n}\right),
$$
which implies 
$$
P\ge n \exp\left(0.0001\frac{\log n}{\log\log n}\right). 
$$
Theorem~\ref{thmain} is proved. 

{\footnotesize

\bibliographystyle{amsplain}
\bibliography{ellip.bib}

\paragraph{Yuri Bilu \& Haojie Hong:} Institut de Mathématiques de Bordeaux, Université de Bordeaux \& CNRS, Talence, France

\paragraph{Florian Luca:}
School of Maths, Wits University, South Africa and
King Abdulaziz University, Jeddah, Saudi Arabia and
IMB, Universit\'e de Bordeaux, France and
Centro de Ciencias Matematicas UNAM, Morelia, Mexico

}

\end{document}